\newtheorem{thm}{Theorem}[section]
\newtheorem{conj}[thm]{Conjecture}
\newtheorem{lem}[thm]{Lemma}
\newtheorem{prop}[thm]{Proposition}
\theoremstyle{definition}
\theoremstyle{remark}
\newtheorem{rem}[thm]{Remark}
\numberwithin{equation}{section}
\newcommand{\integers}{\mathbb{Z}}
\newcommand{\naturals}{\mathbb{N}}
\DeclareMathOperator{\scal}{scal}
\DeclareMathOperator{\pr}{\mathrm{pr}}
\begin{document}
\title{A Counterexample to a Conjecture about Positive Scalar Curvature}

\author{Daniel Pape}
\address{Georg-August-Universit\"{a}t G\"{o}ttingen,
 Bunsenstr. 3, 37073 G\"{o}ttingen, Germany}
\email{pape@uni-math.gwdg.de}
\thanks{Daniel Pape was supported by the German Research Foundation (DFG) through
the Research Training Group 1493 \textquoteleft Mathematical structures in modern quantum physics\textquoteright. \texttt{www.uni-math.gwdg.de/pape}}

\author{Thomas Schick}
\address{Georg-August-Universit\"{a}t G\"{o}ttingen, Bunsenstr. 3, 37073 G\"{o}ttingen, Germany}
\email{schick@uni-math.gwdg.de}
\thanks{Thomas Schick was partially funded by the Courant Research Center \textquoteleft Higher
  order structures in Mathematics\textquoteright\ within the German initiative
  of excellence. \texttt{www.uni-math.gwdg.de/schick}}

\subjclass[2010]{57R65}

\date{Feb 1, 2011}

\commby{Daniel Ruberman}

\maketitle

\begin{abstract}
 \cite[Conjecture 1]{MR2587446} asserts that a closed smooth manifold $M$ with
non-spin universal covering admits a metric of positive scalar curvature if
and only if a certain homological condition is satisfied. We present a
counterexample to this conjecture, based on the 
counterexample to the unstable Gromov-Lawson-Rosenberg conjecture given in
\cite{MR1632971}.
\end{abstract}

\section{The Result}

We give a counterexample to  the following conjecture stated by Chang as
\cite[Conjecture 1]{MR2587446}, and attributed there to Rosenberg and
Weinberger. 

\begin{conj}
\label{conj:Chang's conjecture}
Suppose that $M$ is a compact oriented manifold such that its universal
covering does not admit a spin structure, with fundamental
group $\Gamma$ and of dimension $n\ge 5$. Let $f\colon M\to \underline{B}\Gamma$
be the 
composition of the classifying map $c\colon M\to B\Gamma$ of the universal
covering of $M$, and the natural map 
$B\Gamma\to\underline{B}\Gamma$. Denote by $[M]$ the fundamental class of $M$
in $H_n(M)$.  Then $M$ admits a metric of positive scalar curvature if and
only if $f_*[M]$ vanishes in  $H_n(\underline{B}\Gamma)$. 
\end{conj}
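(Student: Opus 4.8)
The plan is to establish the two implications of the stated equivalence separately, analysing the property ``$M$ admits a metric of positive scalar curvature'' by means of Gromov--Lawson surgery and the associated bordism-theoretic machinery of Rosenberg and Stolz, supplemented by the index obstructions that survive on non-spin manifolds.

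\textbf{Necessity.} First I would show that a metric of positive scalar curvature on $M$ forces $f_*[M]=0$ in $H_n(\underline{B}\Gamma)$. Since the universal covering of $M$ is non-spin, the spin Dirac operator on $M$ is unavailable, so the obstruction has to be produced by the tools that remain effective on totally non-spin manifolds: enlargeability in the sense of Gromov--Lawson together with its index-theoretic refinements (Hanke--Schick, codimension-two index obstructions). The idea is that a nonzero class in $H_n(\underline{B}\Gamma)$ should, after passing to suitable finite covers and restricting to the submanifolds Poincar\'e dual to classes pulled back from $\underline{B}\Gamma$, yield either an enlargeable piece or a nonzero twisted index, and hence contradict positive scalar curvature via the relevant vanishing theorem. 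It is essential here that one uses the classifying space $\underline{B}\Gamma$ for proper actions rather than $B\Gamma$: torsion in $\Gamma$ contributes flat bundles concentrated near fixed points, and these are exactly the extra index data organised by $\underline{B}\Gamma$ and the Baum--Connes assembly map.

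\textbf{Sufficiency.} Conversely, assume $f_*[M]=0$. By Gromov--Lawson, positive scalar curvature is preserved under surgeries of codimension $\ge 3$, so for $n\ge 5$ the existence question depends only on the class $[M,c]\in\Omega^{SO}_n(B\Gamma)$ and its positive refinement; by the bordism theorem it suffices to represent $[M,c]$ by a manifold of positive scalar curvature. The oriented bordism ring is generated by the $\mathbb{CP}^{2k}$ and their products, all of which carry positive scalar curvature, and the Riemannian product of such a manifold with a scalar-flat one (for instance a flat torus) is again positively curved; filtering $\Omega^{SO}_n(B\Gamma)$ by the Atiyah--Hirzebruch spectral sequence, one expects that every class whose image under the edge homomorphism $\Omega^{SO}_n(B\Gamma)\to H_n(B\Gamma)$ vanishes is realisable by such positively curved manifolds, the non-spin hypothesis being used to discard the Dirac-type contributions of any spin summands. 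It then remains to propagate the vanishing hypothesis $f_*[M]=0$ in $H_n(\underline{B}\Gamma)$ through this filtration.

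The main obstacle is precisely this last comparison. The hypothesis controls $f_*[M]$ only in $H_n(\underline{B}\Gamma)$, while the bordism-theoretic obstructions to positive scalar curvature live over $B\Gamma$, and the natural map $B\Gamma\to\underline{B}\Gamma$ may collapse substantial homology arising from the torsion subgroups of $\Gamma$. One therefore has to show that the bordism classes killed by $B\Gamma\to\underline{B}\Gamma$ carry no obstruction to positive scalar curvature --- equivalently, that the index invariants attached to flat bundles over finite subgroups of $\Gamma$ are already subsumed by the single homological condition on $\underline{B}\Gamma$. Making this precise demands a careful study of the positive bordism groups and of $ko_*(\underline{B}\Gamma)$ versus $ko_*(B\Gamma)$, as modules over the Bott element, for groups $\Gamma$ with torsion; this is the delicate point on which the whole equivalence turns.
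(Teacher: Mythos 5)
There is a fundamental problem: the statement you are trying to prove is false, and the paper you are being compared against is devoted precisely to \emph{disproving} it. The paper does not prove Conjecture~\ref{conj:Chang's conjecture}; it constructs a counterexample. Take $\Gamma=\mathbb{Z}^4\oplus\mathbb{Z}/3$, let $M$ be the $5$-dimensional spin manifold of Schick's counterexample to the unstable Gromov--Lawson--Rosenberg conjecture (vanishing Rosenberg index, no positive scalar curvature), and let $N$ be a simply connected non-spin $5$-manifold; then $X=M\# N$ is totally non-spin, and since $\underline{B}\Gamma=\mathbb{T}^4$ one has $H_5(\underline{B}\Gamma)=0$, so the homological hypothesis $f_*[X]=0$ holds automatically --- yet $X$ admits no metric of positive scalar curvature. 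So no proof strategy for the ``sufficiency'' direction can succeed.

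Your own write-up actually locates the fatal point without resolving it: you say one must show that ``the bordism classes killed by $B\Gamma\to\underline{B}\Gamma$ carry no obstruction to positive scalar curvature.'' That is exactly what fails. The torsion class $w=v_1\times\cdots\times v_4\times v_5\in H_5(B\Gamma)$, supported on the $B\mathbb{Z}/3$ factor and hence invisible in $H_5(\underline{B}\Gamma)$, is hit by the totally non-spin manifold $X$, but it is \emph{not} in $H_5^+(B\Gamma)$: by Stolz's observation (cap product with degree-one cohomology classes preserves the subgroup $H_*^+$ of classes represented by positively curved manifolds, in the relevant range of dimensions), $w\in H_5^+(B\Gamma)$ would force $z=[*]\times[*]\times[*]\times v_4\times v_5\in H_2^+(B\Gamma)$, i.e.\ $z$ would be the image of the fundamental class of a positively curved orientable surface, necessarily $S^2$; but $\pi_2(B\Gamma)=0$, so any map $S^2\to B\Gamma$ is null-homotopic, contradicting $z\neq 0$. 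This also shows why your appeal to the oriented bordism theorem and the Atiyah--Hirzebruch edge homomorphism cannot be completed: for totally non-spin manifolds the positive-scalar-curvature problem is \emph{not} controlled by the image in $H_n(\underline{B}\Gamma)$, and the ``Dirac-type contributions'' you propose to discard are not the only obstructions --- the minimal-surface/Stolz descent just described survives even when every index-theoretic invariant vanishes. The ``necessity'' direction is not the issue (and in your sketch it is only heuristically indicated anyway); the conjecture already fails as an ``if'' statement, which is what the paper proves.
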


Here $B\Gamma$ is the classifying space for the group $\Gamma$
and $\underline{B}\Gamma$ is the quotient of the universal space for proper
actions, 
i.e.~the quotient $\underline E\Gamma/\Gamma$, where $\underline E\Gamma$ is a
proper $\Gamma$-space such that for every finite subgroup $F\leq \Gamma$ the
fixed point set $\underline E\Gamma^F$ is contractible (in particular,
non-empty), but such that $\underline E\Gamma^H=\emptyset$ for all other
subgroups $H\leq \Gamma$, compare
\cite[p.~1623]{MR2587446}. 

\medskip

Our counterexample is based on the counterexample to the
Gromov-Lawson-Rosenberg conjecture given in \cite{MR1632971}. There, a
5-dimensional connected closed spin manifold $M$ with fundamental group
$\Gamma=\integers^4\oplus\integers/3$ is constructed, whose Rosenberg index vanishes but
which nevertheless does not admit a  metric of positive scalar curvature. By
taking the connected sum of this manifold $M$ with a simply-connected non-spin
manifold $N$, we obtain a totally non-spin manifold $X$ which has the same
fundamental group as $M$. One has $B\Gamma=T^4\times B\integers/3$ and
analogously $\underline{B}\Gamma=T^4$ by \cite[(1) and (4),
p. 1624]{MR2587446}. Especially, $H_n(\underline{B}\Gamma)=0$ for $n\geq 5$,
so that the condition on $f_*[X]$ from Conjecture~\ref{conj:Chang's
  conjecture} is satisfied in the case at hand. The argument in
\cite{MR1632971} relies on the following observation by Stolz and we will also
make significant use of this result. 

\begin{lem}
\label{lem:Stolz' observation}
Let $X$ be a topological space and set for $n\in\naturals_{\geq 2}$
\[
H_n^+(X):=\{f_*[M]\in H_n(X)\,;\, \textnormal{$f\colon M^n\to X$ and $M$ admits a
  metric with $\scal>0$}\}
\]
 Then for any class $u\in H^1(X)$ the 
map
\[
u\cap\phantom{x}\colon H_n(X)\to H_{n-1}(X)\quad,\quad x\mapsto u\cap x
\]
maps $H_n^+(X)$ into $H_{n-1}^+(X)$ if $3\leq n\leq 8$.
\end{lem}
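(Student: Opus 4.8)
The plan is to reduce the statement to the classical descent of positive scalar curvature along area-minimizing hypersurfaces, due to Schoen--Yau and Gromov--Lawson. Let $u\in H^1(X)=H^1(X;\mbZ)$ and let $x=f_*[M]\in H_n^+(X)$, witnessed by a map $f\colon M^n\to X$ and a metric $g$ on $M$ with $\scal_g>0$. Since $M$ is a closed manifold and $S^1=K(\mbZ,1)$, the class $f^*u\in H^1(M;\mbZ)$ is represented by a smooth map $\bar g\colon M\to S^1$, which after a homotopy we may take transverse to a point $p\in S^1$; then $W_0:=\bar g^{-1}(p)$ is a closed embedded submanifold of dimension $n-1$ with trivial one-dimensional normal bundle. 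The preimage of a regular value is Poincar\'e dual to the pullback of the generator of $H^1(S^1;\mbZ)$, so $[W_0]=f^*u\cap[M]=:\sigma$ in $H_{n-1}(M;\mbZ)$, and by naturality of the cap product $(f|_{W_0})_*[W_0]=f_*(f^*u\cap[M])=u\cap f_*[M]=u\cap x$. Hence it suffices to exhibit a closed $(n-1)$-manifold with $\scal>0$, together with a map to $X$ sending its fundamental class to $u\cap x$; equivalently, to represent $\sigma\in H_{n-1}(M;\mbZ)$ by a $\scal>0$ manifold mapping to $X$ via (a map homotopic to) $f$.

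First I would replace $W_0$ by a geometrically optimal representative of $\sigma$. By the Federer--Fleming compactness theorem there is a mass-minimizing integral $(n-1)$-cycle $T$ in $(M,g)$ with $[T]=\sigma$. Being a codimension-one minimizer in the oriented manifold $M$, the support of $T$ is a smooth, embedded, two-sided minimal hypersurface $W$ away from a singular set of dimension at most $n-8$. When $3\le n\le 7$ this singular set is empty and $W$ is a smooth closed hypersurface, a finite disjoint union of components carried by $T$ with integer multiplicities representing $\sigma$; when $n=8$ the singular set is at most a finite set of points, and here I would invoke N.\ Smale's genericity theorem together with the openness of the condition $\scal>0$ to replace $g$ by a $C^\infty$-nearby metric $g'$ which still has positive scalar curvature and for which the minimizer in the class $\sigma$ is smooth. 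In all cases each component $W$ is a closed two-sided \emph{stable} minimal hypersurface in a manifold of positive scalar curvature (taking $g=g'$ if $n=8$).

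Next I would run the Schoen--Yau second variation argument on each component $W$, of dimension $m:=n-1$. Combining the stability inequality $\int_W|\nabla\phi|^2\ge\int_W\bigl(|A|^2+\mathrm{Ric}_g(\nu,\nu)\bigr)\phi^2$ with the Gauss equation (using that $W$ is minimal) in the form $2\,\mathrm{Ric}_g(\nu,\nu)=\scal_g-\scal_W-|A|^2$ gives $\int_W|\nabla\phi|^2+\tfrac12\int_W\scal_W\phi^2\ge\tfrac12\int_W(|A|^2+\scal_g)\phi^2>0$ for every $\phi\not\equiv0$, hence
\[
\int_W\Bigl(\tfrac{4(m-1)}{m-2}|\nabla\phi|^2+\scal_W\,\phi^2\Bigr)\ \ge\ 2\int_W|\nabla\phi|^2+\int_W\scal_W\,\phi^2\ >\ 0
\]
using $\tfrac{4(m-1)}{m-2}\ge 2$, valid for $m\ge 3$. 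Thus the conformal Laplacian of $W$ has positive first eigenvalue, and conformally changing the induced metric by the (positive) first eigenfunction produces a metric with $\scal>0$ on $W$. The residual case $m=2$, i.e.\ $n=3$: the stability inequality with $\phi\equiv1$ together with Gauss--Bonnet forces $\chi(W)>0$ on each component, so $W$ is a disjoint union of $2$-spheres, which carry $\scal>0$ metrics. Taking the appropriate number of copies of these components (to account for multiplicities) and composing the inclusion into $M$ with $f$ yields a closed $(n-1)$-manifold with $\scal>0$ representing $u\cap x$, so $u\cap x\in H_{n-1}^+(X)$.

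I expect the main obstacle to be the interface between geometric measure theory and topology: one must know that the homologically mass-minimizing representative of $\sigma$ is an honest smooth manifold on which the second variation argument can be carried out. This is precisely where the dimension restriction enters---interior regularity of codimension-one minimizers is automatic for $n\le 7$, while $n=8$ is rescued only because isolated singularities can be removed by a metric perturbation keeping $\scal>0$, and beyond $n=8$ the singular set can be positive-dimensional and the method breaks down. Secondary points needing care are the bookkeeping with multiplicities and connected components of the minimizer, and checking that in the case $n=8$ the perturbed metric $g'$ can be chosen simultaneously close enough to $g$ to retain $\scal>0$ and generic in Smale's sense; both are routine but should be recorded.
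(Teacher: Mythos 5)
Your proposal is correct and reproduces, in outline, exactly the argument contained in the references the paper cites for this lemma (Schick's Corollary~1.5 for $3\le n\le 7$ and Joachim--Schick's Theorem~4.4 for $n=8$): represent $u\cap x$ geometrically by the preimage of a regular value of a map to $S^1$, replace it by a homologically mass-minimizing hypersurface, and descend positive scalar curvature via the Schoen--Yau stability/conformal-rescaling argument, with the $n=8$ case rescued by N.~Smale's generic regularity after a PSC-preserving perturbation. Since the paper itself offers no proof beyond these two citations, your write-up is a faithful expansion of that approach rather than an alternative one.
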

\begin{proof}
See \cite[Corollary 1.5]{MR1632971} for $3\le n\le 7$ and \cite[Theorem
4.4]{JoachimSchick} for $n=8$. 
\end{proof}

Our result reads now as follows.

\begin{prop}
\label{prop:Main result}
Let $M$ be the manifold constructed in \cite{MR1632971} (we recall its
construction in Section \ref{sec:proof}) and $N$ a simply connected manifold
of dimension 5 which admits no spin structure.
Then the manifold $X:=M\# N$ has non-spin universal covering and admits no
metric with positive scalar curvature.
\end{prop}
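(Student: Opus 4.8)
The plan is to treat the two assertions separately; the statement about the universal covering is elementary and the main content is the absence of a positive-scalar-curvature metric. For the first, note that since $N$ is simply connected, van Kampen gives $\pi_1(X)\cong\pi_1(M)=\Gamma$. Write $X=(M\setminus D_1)\cup_{S^4}(N\setminus D_2)$ with open $5$-disks $D_1\subset M$, $D_2\subset N$. Because $N\setminus D_2$ is simply connected, its preimage under the universal covering $p\pp\tilde X\pf X$ is a disjoint union of copies of $N\setminus D_2$, each mapped diffeomorphically by $p$; fix one of them, $N_0\subset\tilde X$. As $N_0$ is a codimension-zero submanifold we have $T\tilde X|_{N_0}=TN_0$, so the inclusion pulls $w_2(\tilde X)$ back to $w_2(N_0)$. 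Now $w_2(N)\neq 0$ because $N$ is not spin, and the restriction $H^2(N;\mbZ/2)\to H^2(N\setminus D_2;\mbZ/2)$ is an isomorphism (the relative cohomology of $(N,N\setminus D_2)$ vanishes in degrees below $5$); hence $w_2(N_0)\neq 0$, so $w_2(\tilde X)\neq 0$ and $\tilde X$ admits no spin structure. (Running the same argument with $X$ in place of $\tilde X$ gives $w_2(X)\neq 0$, so $X$ is in fact totally non-spin.)

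For the positive-scalar-curvature statement, the key step is to transport the obstruction from $M$ to $X$. Collapsing the $N$-summand to a point gives a degree-one map $q\pp X\pf M$; since $q$ induces an isomorphism on fundamental groups and $B\Gamma$ is aspherical, $c_M\circ q$ is homotopic to $c_X$, whence
\[
c_{X*}[X]=c_{M*}q_*[X]=c_{M*}[M]\in H_5(B\Gamma).
\]
If $X$ carried a metric with $\scal>0$, then $c_{X*}[X]\in H_5^+(B\Gamma)$ by definition, so it suffices to prove $c_{M*}[M]\notin H_5^+(B\Gamma)$ --- which is exactly the mechanism behind the counterexample in \cite{MR1632971}, and we reprove it with Lemma~\ref{lem:Stolz' observation}.

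Recall (the precise statement, together with the construction of $M$, appears in Section~\ref{sec:proof}) that $M$ is built so that $c_{M*}[M]$ is the order-three class $[\mbT^4]\times\beta$, where $\beta$ generates $H_1(B\mbZ/3)\cong\mbZ/3$, $[\mbT^4]\in H_4(\mbT^4)$ is the fundamental class, and we use $B\Gamma=\mbT^4\times B\mbZ/3$. Let $t_1,t_2,t_3\in H^1(B\Gamma)$ be the pullbacks of three of the standard generators of $H^1(\mbT^4)$. Using the identity $(t_i\times 1)\cap(a\times b)=(t_i\cap a)\times b$ and the K\"unneth theorem, capping successively with $t_1,t_2,t_3$ sends $[\mbT^4]\times\beta$ to a class of the shape $[S^1]\times\beta\in H_2(S^1\times B\mbZ/3)\subset H_2(B\Gamma)$ (with $S^1$ the remaining circle factor), which is nonzero, still of order $3$. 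Suppose $c_{M*}[M]\in H_5^+(B\Gamma)$. Three applications of Lemma~\ref{lem:Stolz' observation}, with $n=5,4,3$ (all in the admissible range $3\le n\le 8$), would then place $[S^1]\times\beta$ in $H_2^+(B\Gamma)$. But a closed oriented surface carrying positive scalar curvature is, by Gauss--Bonnet, a disjoint union of $2$-spheres, so $H_2^+(B\Gamma)$ equals the image of the Hurewicz map $\pi_2(B\Gamma)\pf H_2(B\Gamma)$, which vanishes since $B\Gamma$ is aspherical. This contradicts $[S^1]\times\beta\neq 0$; hence $c_{M*}[M]\notin H_5^+(B\Gamma)$ and $X$ admits no metric of positive scalar curvature. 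I expect the real work here to be bookkeeping rather than conceptual: pinning down $c_{M*}[M]$ precisely (which forces one to recall the construction of $M$ from \cite{MR1632971} with some care) and checking that the three cap products genuinely keep the class nonzero all the way down to $H_2(B\Gamma)$; the remaining ingredients --- van Kampen, the degree-one collapse map with $c_M\circ q\simeq c_X$, the excision computation for $w_2$, and the identification of $H_2^+(B\Gamma)$ with a Hurewicz image --- are routine.
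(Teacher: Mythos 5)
Your proof is correct and follows essentially the same route as the paper's: identify $c_{M*}[M]=w\in H_5(B\Gamma)$ with the class $[\mbT^4]\times\beta$, show it lies in $H_5^+(B\Gamma)$ if $X$ had positive scalar curvature, apply Lemma~\ref{lem:Stolz' observation} three times with $H^1$-classes to land on a nonzero class in $H_2^+(B\Gamma)$, and derive a contradiction from $\pi_2(B\Gamma)=0$. The only deviations are cosmetic --- you transport the class via a degree-one collapse map $q\pp X\pf M$ together with $c_M\circ q\simeq c_X$ where the paper instead uses the bordism $M\sqcup N\sim M\# N$, and you spell out the $w_2$-plus-excision argument for non-spinness of $\tilde X$ which the paper simply asserts.
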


This result is part of the first named author's forthcoming thesis~\cite{thesis_pape}.

\section{The Proof}\label{sec:proof}

\begin{proof}[Proof of Proposition~\ref{prop:Main result}]
First of all, if $X$ is constructed as above, we have already noted that it
has non-spin universal covering. To obtain an explicit simply-connected
non-spin $5$-manifold $N$, one can start with 
$\mathbb{C} P^2\times S^1$, which is non-spin as $\mathbb{C}P^2$ is, and then
do surgery on the embedded $S^1$ to obtain the simply-connected $N$. Because
this surgery does not touch 
the embedded $\mathbb{C}P^1$ with its non-spin normal bundle, the resulting
$N$ remains a non-spin manifold.

\medskip

In order to see that $X$ admits no metric of positive scalar curvature, we use the same argument
as in \cite{MR1632971}. To begin with, we choose the model $B\Gamma=T^4\times B\integers/3$. Recall,
\[
H_n(T^d)\cong\integers^{d(n)}\quad,\quad d(n)=\binom{d}{n}
\]
and
\[
H_n(B\integers/k\integers)\cong
\begin{cases}
\integers,   & \textnormal{$n=0$;}   \\
\integers/k\integers, & \textnormal{$n$ odd;} \\
0,      & \textnormal{$n$ even.}
\end{cases}
\]
Together with the K\"{u}nneth formula this gives
\[
H_k(B\Gamma)=\bigoplus_{p_1+\cdots+p_5=k}
H_{p_1}(X_1)\otimes\cdots\otimes  H_{p_5}(X_5)
\text{ .}
\]
Here we have written $T^4=X_1\times\cdots\times X_4$ as product
of four copies of $S^1$, and $X_5$ for $B\integers/3$. 

Fix a basepoint $x=(x_1,\ldots,x_5)\in B\Gamma$ and let
$p\colon T\to B\integers/3$ be a map which induces an epimorphism on $\pi_1$ as in
\cite{MR1632971}, as well as 
$f_j\colon X_j\to B\Gamma$ the map which includes $X_j$ identically
and basepoint-preserving. We denote by $[*]\in H_0(B\Gamma)$ the canonical
generator.
Next, choose for each $1\leq j\leq 4$ generators $g_j\in H_1(X_j)$
and elements $g_j^*\in H^1(X_j)$ with $\langle g_j^*,g_j\rangle=1$, and let
$g_5\in H_1(X_5)$ be $p_*[S^1]$ where $[S^1]$ is the standard generator for
$H_1(S^1)$. 
Introduce the  elements $v_j:= (f_j)_*(g_j)\in H_1(B\Gamma)$ for $j=1,\dots,
5$ as well as $a_1,\dots, a_4\in H^1(B\Gamma)$ with
\begin{align*}
a_1 &:= (\pr_1)^*(g_1^*)\times 1               \times 1    \times 1    \times 1\text{ ,}\\
a_2 &:= 1               \times (\pr_2)^*(g_2^*) \times 1    \times 1    \times 1\text{ ,}\\
a_3 &:= 1               \times 1               \times (\pr_3)^*(g_3^*) \times 1\times 1\text{ ,}\\
a_4 &:= 1               \times 1               \times 1                \times (\pr_4)^*(g_4^*)\times 1\text{ .}\\
\end{align*}
Finally, set
\[
w:=v_1\times\cdots\times v_4\times v_5\in H_5(B\Gamma)
\]
and
\[
z:=[*]\times[*]\times[*]\times v_4\times v_5\in H_2(B\Gamma)\text{ .}
\]
By the K\"unneth formula, $w\neq 0$ and $z\neq 0$. Furthermore,
\[
\label{eq:z_cap}
z=a_1\cap(a_2\cap(a_3\cap w))\in H_2(B\Gamma)\text{ .}
\tag{$\ast$}
\]
For example one has
\begin{align*}
a_3\cap w &= \big(\big(1\times 1\times (\pr_3)^*(g_3^*)\big)\times \big(1\times 1\big)\big)
\cap\big(\big(v_1\times v_2\times v_3\big)\times \big(v_4\times v_5\big)\big) \\
&=
\big(\big(1\times 1\times (\pr_3)^*(g_3^*)\big)\cap \big(v_1\times v_2\times v_3\big)\big)
\times
\big((1\times 1)\cap (v_4\times v_5)\big) \\
&=
\big(\big(1\cap v_1\big)\times\big(1\cap v_2\big)\times \big((\pr_3)^*(g_3)^*\cap v_3\big)\big)
\times\big(\big(1\cap v_4\big)\times \big(1\cap v_5\big)\big) \\
&=
v_1\times v_2\times \big((\pr_3)^*(g_3^*)\cap v_3\big)\times v_4\times v_5 \\
&=
v_1\times v_2\times [*]\times v_4\times v_5\text{ ,}
\end{align*}
because of $(\pr_3)^*(g_3^*)\cap (i_3)_*(g_3)=\langle g_3^*,g_3\rangle [*]$.
Let $f\colon T^5\to T^4\times B\integers/3$ be given by $f=(f_1\times
f_2\times 
f_3\times f_4)\times (f_5\circ p)$ and choose $(g_1\times\cdots\times
g_4)\times [S^1]=:[T^5]$ as fundamental class for $T^5$. Then
$f_*[T^5]=w$. As in \cite{MR1632971} one can construct a bordism 
in $\Omega_5^{\mathrm{spin}}(B\Gamma)$ from $f$ to a map $g\colon M\to
B\Gamma$ which induces an isomorphism of fundamental groups. This defines the
manifold $M$. Now let $N$ be any simply-connected closed non-spin manifold of
dimension $5$ and set $X:=M\# N$. 

Finally, assume that $X$
admits a metric of positive scalar curvature. Then consider the map $h\colon
M\sqcup N\to B\Gamma$  
on the disjoint union of $M$ and $N$, which equals $g$ on $M$ and sends $N$ to
a point. One has $h_*[M\sqcup N]=g_*[M]=w$ and since $M\sqcup N$ is bordant to
$M\# N$, it follows 
that $w\in H^+_5(X)$. But then it follows from~\eqref{eq:z_cap} as well as
Lemma~\ref{lem:Stolz' observation} that $w$ is mapped to $z$ under the
following composition
\begin{equation*}
H_5^+(B\Gamma) \xrightarrow{a_3\cap\,\cdot\,}{} 
H_4^+(B\Gamma) \xrightarrow{a_2\cap\,\cdot\,}{} 
H_3^+(B\Gamma) \xrightarrow{a_1\cap\,\cdot\,}{} 
H_2^+(B\Gamma) .
\end{equation*}
Hence $z=k_*[S^2]$ for some $k\colon S^2\to B\Gamma$ since
$S^2$ is the only orientable surface which admits a metric of positive scalar
curvature. On the other hand, $\pi_2(B\Gamma)=0$ so that $k$ is null homotopic.
This implies $z=0$, which is a contradiction. 
\end{proof}

\begin{rem}
  The method described in this note produces a counterexample to Conjecture
  \ref{conj:Chang's conjecture} with fundamental group $\Gamma$ whenever
    $\Gamma$ satisfies the following  homological conditions:
    \begin{itemize}
    \item for $5\le m\le 8$ there is a homology class $[M]\in
      H_m(B\Gamma;\integers)$ 
      represented by an $m$-dimensional closed oriented manifold $M$ (with
      surgeries one can then arrange that $\pi_1(M)=\Gamma$)
    \item there are classes $\alpha_1,\alpha_{m-2}\in H^1(B\Gamma;\integers)$
      such that $\alpha_1\cap(\dots \cap (\alpha_{m-2}\cap [M])) \ne 0 \in
      H_2(B\Gamma;\integers)$
    \item under the map $H_m(B\Gamma)\to H_m(\underline{B}\Gamma)$ the class
      $[M]$ is send to $0$.
    \end{itemize}
  Note that this condition is similar, indeed much easier than the general
  homological 
  condition for counterexamples to the Gromov-Lawson-Rosenberg condition
  derived in \cite{MR2048721}. Unfortunately, its structure requires the group
  $\Gamma$ to contain non-trivial torsion, to allow for a kernel of the map
  $H_*(B\Gamma)\to H_*(\underline{B}\Gamma)$ (in contrast to
  \cite{MR2048721}).

  The assumption on $H^1(B\Gamma;\integers)$ we have to make is very strong,
  it has to have rank at least $m-2$. In particular, the method tells us
  nothing about finite groups. Indeed, the question of
  existence for $k$ or lower dimensional manifolds with finite fundamental
  group $(\integers/p\integers)^k$ for $p$ odd is completely open (in the
  totally non-spin case
  as well as in the spin case) and seems the first obstacle for a full
  understanding of this problem. Progress in this direction will require
  a completely new set of ideas.
\end{rem}

\addcontentsline{toc}{section}{References}         %
\bibliographystyle{amsplain}
\bibliography{Literaturdatenbank_Counterexample_Chang}
\end{document}